\begin{document}

\newcommand{\wk}{\mbox{$\,<$\hspace{-5pt}\footnotesize )$\,$}}

\numberwithin{equation}{section}
\newtheorem{teo}{Theorem}
\newtheorem{lemma}{Lemma}

\newtheorem{coro}{Corollary}
\newtheorem{prop}{Proposition}
\theoremstyle{definition}
\newtheorem{definition}{Definition}
\theoremstyle{remark}
\newtheorem{remark}{Remark}

\newtheorem{scho}{Scholium}
\newtheorem{open}{Question}
\newtheorem{example}{Example}
\numberwithin{example}{section}
\numberwithin{lemma}{section}
\numberwithin{prop}{section}
\numberwithin{teo}{section}
\numberwithin{definition}{section}
\numberwithin{coro}{section}
\numberwithin{figure}{section}
\numberwithin{remark}{section}
\numberwithin{scho}{section}

\bibliographystyle{abbrv}

\title{On curvature of surfaces immersed in normed spaces}
\date{}
\author{Vitor Balestro\footnote{Corresponding author}  \\ CEFET/RJ Campus Nova Friburgo \\ 28635000 Nova Friburgo \\ Brazil \\ vitorbalestro@gmail.com \and Horst Martini \\ Fakult\"{a}t f\"{u}r Mathematik \\ Technische Universit\"{a}t Chemnitz \\ 09107 Chemnitz\\ Germany \\ martini@mathematik.tu-chemnitz.de \and  Ralph Teixeira \\ Instituto de Matem\'{a}tica e Estat\'{i}stica  \\ Universidade Federal Fluminense \\24210201 Niter\'{o}i\\ Brazil \\ ralph@mat.uff.br}

\maketitle

\begin{abstract} The normal map given by Birkhoff orthogonality yields extensions of principal, Gaussian and mean curvatures to surfaces immersed in three-dimensional spaces whose geometry is given by an arbitrary norm and which are also called Minkowski spaces. We obtain characterizations of the Minkowski Gaussian curvature in terms of surface areas, and respective generalizations of the classical theorems of Huber, Willmore, Alexandrov, and Bertrand-Diguet-Puiseux are derived. A generalization of Weyl's formula for the volume of tubes and some estimates for volumes and areas in terms of curvature are obtained, and in addition we discuss also two-dimensional subcases of the results in more detail.

\end{abstract}

\noindent\textbf{Keywords}: Alexandrov's theorem, Birkhoff-Gauss map, Finsler manifold, Minkowski curvature, normed space, Weyl's tube formula.

\bigskip

\noindent\textbf{MSC 2010:} 53A35, 53A15, 53A10, 58B20, 52A15, 52A21, 46B20

\section{Introduction}

We are concerned with the extension of the classical differential geometry of surfaces to the context where the ambient space is equipped with a norm not necessarily induced by an inner product. Thus we refer to the geometry of real, finite dimensional, normed spaces, usually also called \emph{Minkowski geometry} (see the monograph \cite{Tho}). A surface immersed in such a space becomes a Finsler manifold with the metric induced from the ambient norm, and we also can define an analogue of the Gauss map given by the orthogonality concept which is commonly used in Minkowski geometry (namely, that of \emph{Birkhoff orthogonality}). This is extensively studied in the papers \cite{Ba-Ma-Tei1}, \cite{Ba-Ma-Tei2} and \cite{Ba-Ma-Tei3}, where the fundaments of this theory are established. Besides \cite{Tho}, basic references on Minkowski geometry are the surveys \cite{martini2} and \cite{martini1}. Our objective in the present paper is to prove some results that extend geometric interpretations of the classical curvature concepts to the Minkowski context. We start by briefly describing the basic concepts of the theory, referring the reader also to the mentioned papers. In a comparing sense we mention also \cite{Ba-Ma-Sho}, where curvature concepts of curves in Minkowski planes are discussed.

Let $||\cdot||$ be a norm in $\mathbb{R}^3$ which is smooth and strictly convex. Its \emph{unit ball} is the set $B:=\{x \in \mathbb{R}^3:||x|| \leq 1\}$, and the boundary $\partial B:=\{x \in \mathbb{R}^3:||x|| = 1\}$ of $B$ is called the \emph{unit sphere} of the Minkowski space $(\mathbb{R}^3,||\cdot||)$. It is clear that $\partial B$ is a compact surface without boundary, and througout the text, we will always assume that its (usual) Gaussian curvature never vanishes. We denote by $\langle\cdot,\cdot\rangle:\mathbb{R}^3\times \mathbb{R}^3\rightarrow\mathbb{R}$ the usual inner product of $\mathbb{R}^3$, and the norm induced by that is called \emph{Euclidean}. The unit ball and unit sphere of the Euclidean norm are denoted by $B_e$ and $\partial B_e$, respectively. We also define the map $u:\partial B_e\rightarrow \partial B$ to be the inverse of the (usual) Gauss map of $\partial B$ (outward pointing, say).

The norm $||\cdot||$ induces an orthogonality relation known as \emph{Birkhoff orthogonality}. We say that a vector $v \in \mathbb{R}^3$ is \emph{Birkhoff orthogonal} to a plane $H \subseteq \mathbb{R}^3$ whenever $||v|| \leq ||v+tw||$ for any $w \in H$ and $t \in \mathbb{R}$. Geometrically this means that, if $v \neq 0$, then the plane $H$ supports the unit ball at $v/||v||$ (for more on orthogonality concepts in Minkowski geometry we refer the reader to \cite{alonso} and references therein). Given a (orientable) surface immersion $f:M\rightarrow(\mathbb{R}^3,||\cdot||)$, Birkhoff orthogonality defines a map by associating each point $p \in M$ to a unit vector $\eta(p)$ which is Birkhoff orthogonal to the tangent plane $T_pM$ (identified with $f_*(T_pM)$, of course). This map $\eta:M\rightarrow \partial B$ is called the \emph{Birkhoff-Gauss map} of $M$. In the language of affine differential geometry, one can show that this is an \emph{equiaffine normal vector field} on $M$ (see \cite{nomizu}). The \emph{Minkowski Gaussian curvature} and the \emph{Minkowski mean curvature} of $M$ at $p \in M$ are defined as
\begin{align*} K(p) := \mathrm{det}\left(d\eta_p\right) \  \ \mathrm{and} \\
H(p) := \frac{1}{2}\mathrm{tr}(d\eta_p),
\end{align*} 
respectively, where $\mathrm{det}$ and $\mathrm{tr}$ denote the usual determinant and trace. Each differential map $d\eta_p$ is self-adjoint, and hence the Minkowski Gaussian curvature equals the product of its eigenvalues, which we call the \emph{principal curvatures}. 

\section{Area and curvature}

The usual determinant in $\mathbb{R}^3$ induces an area element in $M$ as
\begin{align}\label{areaform} \omega(X,Y) := \mathrm{det}(X,Y,\eta),
\end{align}
for each $X,Y \in TM$. With that area element, the \emph{area} of an open bounded subset $D \subseteq M$ is given as
\begin{align*} \lambda_M(D) := \int_{D}\omega.
\end{align*}

In the next theorem, we characterize the Minkowski Gaussian curvature at a point $p \in M$ as the limit ratio between the area of the image of a small region around $p$ under the Birkhoff-Gauss map and the area of the region itself. We adopt the convention that this area is negative in a subset where the Minkowski Gaussian curvature is negative. 

\begin{teo}\label{main} Let $p \in M$ be a point where $K(p) \neq 0$, and let $U\subseteq M$ be a connected neighborhood of $p$ where the sign of $K$ does not change. Then
\begin{align}\label{gaussian} K(p) = \lim_{D\rightarrow p}\frac{\lambda_{\partial B}(\eta(D))}{\lambda_M(D)},
\end{align}
where $D\subseteq U$ and $\eta(D) \subseteq \partial B$ denotes the image of $D$ under the Birkhoff-Gauss map.
\end{teo}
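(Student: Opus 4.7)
The plan is to recognize the numerator in \eqref{gaussian} as the integral over $D$ of a pulled-back $2$-form, namely $\eta^{*}\omega_{\partial B}$, and to establish the pointwise identity $\eta^{*}\omega_{\partial B} = K\,\omega_M$ on $U$. Once that identity is in hand, the theorem follows by dividing through by $\lambda_M(D)=\int_D\omega_M$ and applying the mean value theorem for integrals together with the continuity of $K$.

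First I identify the area form $\omega_{\partial B}$ that the construction \eqref{areaform} assigns to $\partial B$ itself. Since $T_u\partial B$ supports $B$ at $u$, the Birkhoff-Gauss map of $\partial B$ is the identity inclusion $u\mapsto u$, so \eqref{areaform} specializes to $\omega_{\partial B}(X,Y)=\det(X,Y,u)$ at $u\in\partial B$. This is the key observation that makes everything computable.

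Next I compute the pullback at $p\in U$. For $X,Y\in T_pM$, the vectors $d\eta_p(X)$ and $d\eta_p(Y)$ lie in $T_{\eta(p)}\partial B$, and this plane coincides with $T_pM$ because both are characterized as the plane Birkhoff-orthogonal to $\eta(p)$. Hence $d\eta_p$ is an endomorphism of a single $2$-plane, and expanding the $3\times 3$ determinant along the $\eta$-direction gives
\[
(\eta^{*}\omega_{\partial B})(X,Y)=\det\!\bigl(d\eta_p(X),d\eta_p(Y),\eta(p)\bigr)=\det(d\eta_p)\cdot\det(X,Y,\eta(p))=K(p)\,\omega_M(X,Y).
\]
Because $K(p)\neq 0$, the map $\eta$ is a local diffeomorphism near $p$, and after shrinking $U$ I may assume $\eta|_U$ is a diffeomorphism onto its image; the standing assumption that $K$ has constant sign on $U$ ensures that $\eta$ is uniformly orientation-preserving or orientation-reversing there. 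For $D\subseteq U$ with $p\in D$, the change-of-variables formula, combined with the signed-area convention stated immediately before the theorem, yields $\lambda_{\partial B}(\eta(D))=\int_D\eta^{*}\omega_{\partial B}=\int_D K\,\omega_M$. Dividing by $\lambda_M(D)$ and letting $D$ shrink to $p$ gives \eqref{gaussian} by the integral mean value theorem applied to the continuous function $K$.

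The only real obstacle is conceptual rather than computational: one must correctly identify $\omega_{\partial B}$ through the fact that the Birkhoff-Gauss map of $\partial B$ is the identity, and then verify the pullback identity $\eta^{*}\omega_{\partial B}=K\,\omega_M$ in the Minkowski (non-Euclidean) setting, with careful attention to the sign convention that makes $\lambda_{\partial B}(\eta(D))$ negative precisely when $\eta$ reverses orientation. Everything else is standard.
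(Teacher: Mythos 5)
Your proposal is correct and follows essentially the same route as the paper: both arguments reduce the numerator to $\int_D K\,\omega$ via the identity $\omega(d\eta(X),d\eta(Y))=\det(d\eta)\,\omega(X,Y)$ (which the paper asserts inside its equation (\ref{areaball}) using a local parametrization, and which you justify more explicitly through the facts that the Birkhoff--Gauss map of $\partial B$ is the identity and that $T_{\eta(p)}\partial B = T_pM$), and then both conclude by the mean value theorem for integrals and continuity of $K$. Your version is, if anything, slightly more careful about the one step the paper leaves implicit.
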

\begin{proof} Let $\varphi(u,v):V\rightarrow D$ be a local parametrization, where $V$ is an open disk, say. The area of $D$ writes
\begin{align*} \lambda_M(D) = \int_V\omega(\varphi_u,\varphi_v) \ dudv,
\end{align*}
where $\varphi_u$ and $\varphi_v$ are the coordinate vector fields. Since $K(p) \neq 0$, by the inverse function theorem one can take $D$ small enough such that the restriction $\eta|_D$ is a diffeomorphism onto its image. In that case, the map $\eta\circ\varphi:V\rightarrow \partial B$ becomes a local parametrization of $\eta(D)$. The area of this region is calculated by
\begin{align}\label{areaball} \lambda_{\partial B}(\eta(D)) = \int_V\omega(d\eta(\varphi_u),d\eta(\varphi_v)) \ dudv = \int_V\mathrm{det}(d\eta)\cdot\omega(\varphi_u,\varphi_v) \ dudv.
\end{align}
Denoting by $\lambda(V)$ the usual area of $V$, we get from the mean value theorem for integrals that for each region $D$ there exists a point $q_1 \in D$ such that  
\begin{align*} \omega_{q_1}(\varphi_u,\varphi_v) = \frac{1}{\lambda(V)} \int_{V}\omega(\varphi_u,\varphi_v) \ dudv = \frac{\lambda_M(D)}{\lambda(V)},
\end{align*}
and the same holds for the other integral, for some point $q_2 \in D$. As $D \rightarrow p$, we get that $q_1,q_2 \rightarrow p$, and hence continuity yields
\begin{align*} \lim_{D\rightarrow p}\frac{\lambda_{\partial B}(\eta(D))}{\lambda_M(D)} = \lim_{D\rightarrow p}\frac{\frac{\lambda_{\partial B}(\eta(D))}{\lambda(V)}}{\frac{\lambda_M(D)}{\lambda(V)}} = \lim_{D\rightarrow p} \frac{\mathrm{det}\left(d\eta_{q_2}\right)\cdot\omega_{q_2}(\varphi_u,\varphi_v)}{\omega_{q_1}(\varphi_u,\varphi_v)} = K(p),
\end{align*}
completing the proof.

\end{proof}

\begin{remark}\label{curve} This theorem shows that the Minkowski Gaussian curvature is, in some sense, an extension of the concept of \emph{circular curvature} of a plane curve, cf. \cite{Ba-Ma-Sho}. Let $(\mathbb{R}^2,||\cdot||)$ be a normed plane with unit sphere $\partial B$ (which, in the two-dimensional case, is also called \emph{unit circle}), and assume that this is a smooth regular curve. Let $\varphi(t)$ be a parametrization by arc-length of the unit circle, and let $\gamma(s)$ be a smooth regular curve parametrized by arc-length. We choose a smooth function $t(s)$ such that 
\begin{align*} \gamma'(s) = \frac{d\varphi}{dt}(t(s)),
\end{align*}
and the \emph{circular curvature} of $\gamma$ at $\gamma(s)$ is defined as $k_c(s) := t'(s)$, see \cite{Ba-Ma-Sho} for further details. It is also worth mentioning that if $k_c(s) > 0$, then $1/k_c(s)$ is the radius of an osculating Minkowski circle attached to $\gamma$ at $\gamma(s)$.  The analogue of the Birkhoff-Gauss map for the curve $\gamma$ is clearly the map $s \mapsto \varphi(t(s))$, and hence $t(\bar{s})-t(s)$ equals the length of the image, under the Birkhoff-Gauss map, of an arc of $\gamma$ whose length is $\bar{s}-s$. Analogously to (\ref{gaussian}) we get
\begin{align*} k_c(s) = t'(s) = \lim_{\bar{s}\rightarrow s}\frac{t(\bar{s})-t(s)}{\bar{s}-s}.
\end{align*}
\end{remark}

A \emph{homothety} of the space is a map $F:\mathbb{R}^3\rightarrow\mathbb{R}^3$ given as $F(p) = cp$ for some constant $c > 0$. As a consequence of the previous theorem we will describe what happens with the Minkowski Gaussian curvature of an immersed surface under a homothety. 

\begin{coro} Let $M$ be an immersed surface in $(\mathbb{R}^3,||\cdot||)$, and $c > 0$ be a constant. For each $p \in M$, the Minkowski Gaussian curvature $\bar{K}(F(p))$ of the image $\bar{M}$ of $M$ by the homothety $F(p) = cp$ at $F(p)$ is given as
\begin{align*} \bar{K}(F(p)) = \frac{K(p)}{c^2},
\end{align*}
where $K$ denotes the Minkowski Gaussian curvature of $M$ at $p$. 
\end{coro}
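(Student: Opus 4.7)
The plan is to exploit Theorem \ref{main}, which expresses $K(p)$ as a limit of area ratios, and to track how each ingredient transforms under the homothety $F(p)=cp$. There are three things to check: how the Birkhoff-Gauss map transforms, how the area element $\lambda_M$ scales on $\bar M=F(M)$, and how the area $\lambda_{\partial B}$ of the image on the unit sphere is affected.

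First I would observe that the Birkhoff-Gauss map is invariant under $F$, i.e.\ $\bar\eta(F(p))=\eta(p)$. Indeed, $F$ is linear, so $dF_p$ maps $T_pM$ isomorphically to $T_{F(p)}\bar M$, and the image plane is parallel to $T_pM$. Since Birkhoff orthogonality to a plane $H$ only depends on the direction of $H$ (it asks that $H$ support the unit ball at $v/\|v\|$), the unit Birkhoff normal is the same at $p$ and $F(p)$.

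Next, I would compute how the area form transforms. If $\varphi(u,v)$ parametrizes a small region $D\subseteq M$ near $p$, then $\bar\varphi:=F\circ\varphi$ parametrizes $\bar D=F(D)$, with $\bar\varphi_u=c\varphi_u$ and $\bar\varphi_v=c\varphi_v$. By multilinearity of the determinant in (\ref{areaform}), together with $\bar\eta=\eta$, we obtain
\begin{align*}
\bar\omega(\bar\varphi_u,\bar\varphi_v)=\det(c\varphi_u,c\varphi_v,\eta)=c^2\,\omega(\varphi_u,\varphi_v),
\end{align*}
so $\lambda_{\bar M}(\bar D)=c^2\lambda_M(D)$. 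On the other hand, since $\bar\eta(\bar D)=\eta(D)$ as subsets of $\partial B$, the spherical area is unchanged: $\lambda_{\partial B}(\bar\eta(\bar D))=\lambda_{\partial B}(\eta(D))$.

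Putting these into Theorem \ref{main} applied to $\bar M$ at $F(p)$ gives
\begin{align*}
\bar K(F(p))=\lim_{\bar D\rightarrow F(p)}\frac{\lambda_{\partial B}(\bar\eta(\bar D))}{\lambda_{\bar M}(\bar D)}=\lim_{D\rightarrow p}\frac{\lambda_{\partial B}(\eta(D))}{c^2\lambda_M(D)}=\frac{K(p)}{c^2},
\end{align*}
as required. The only delicate point is the invariance $\bar\eta=\eta$, but this is a direct consequence of Birkhoff orthogonality being a condition on the direction of the tangent plane; everything else is bookkeeping with the scaling of the determinant.
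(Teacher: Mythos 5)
Your proposal is correct and follows essentially the same route as the paper: compute the $c^2$ scaling of the area element under $F$, observe that the Birkhoff normal (and hence the spherical image and its area) is unchanged, and feed both into Theorem \ref{main}. The only difference is that you spell out why $\bar\eta = \eta$ via the direction-dependence of Birkhoff orthogonality, which the paper treats as clear.
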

\begin{proof} Let $p \in M$, and $\varphi(u,v):V\rightarrow D$ be a local parametrization of a neighborhood $D \subseteq M$ of $p$. Then, $c\cdot\varphi(u,v):V\rightarrow D$ is a local parametrization of $F(M)$ around $F(p)$, and
\begin{align*} \lambda_{\bar{M}}(\bar{D}) = \int_V\omega(c\varphi_u,c\varphi_v) \ dudv = c^2\lambda_M(D).
\end{align*}
On the other hand, it is clear that the normal vector to $M$ at any $q \in D$ is the same as the normal vector to $\bar{M}$ at $F(q) \in \bar{D} = F(D)$. Therefore,
\begin{align*} \lambda_{\partial B}(\eta(\bar{D})) = \lambda_{\partial B}(\eta(D)).
\end{align*}
Now we just calculate
\begin{align*} \bar{K}(F(p)) = \lim_{\bar{D}\rightarrow F(p)}\frac{\lambda_{\partial B}(\eta(\bar{D}))}{\lambda_{\bar{M}}(D)} = \lim_{D\rightarrow p}\frac{\lambda_{\partial B}(\eta(D))}{c^2\lambda_M(D)} =\frac{1}{c^2}\cdot\lim_{D\rightarrow p}\frac{\lambda_{\partial B}(\eta(D))}{\lambda_M(D)}= \frac{K(p)}{c^2},
\end{align*}
where we use the clear fact that $\bar{D}\rightarrow F(p)$ if and only if $D\rightarrow p$. 

\end{proof}

\begin{remark} This can be also obtained from the fact that the principal curvatures of $M$ get divided by $c$ under the homothety $p \mapsto cp$. This fact comes immediately from the characterization of the principal curvatures at a given point as the maximum and minimum of the normal curvature at that point (see \cite{Ba-Ma-Tei1}). 
\end{remark}

Given a surface $M\subseteq\mathbb{R}^3$ (recall that we are identifying $f(M)$ with $M$), a \emph{parallel surface} of $M$ is a surface $\bar{M} := \{p+c\eta(p):p \in M\}$ for some constant $c \in \mathbb{R}$. As a further consequence of Theorem \ref{main} we get a formula for the Minkowski Gaussian curvature of a parallel surface in a regular point (a parallel surface can have singular points). 

\begin{teo}\label{parallel} Let $M$ be an immersed surface with Minkowski Gaussian curvature $K$ and Minkowski mean curvature $H$. For a given constant $c \in \mathbb{R}$, let $\bar{M}$ be the parallel surface as defined above. Then its Minkowski Gaussian curvature is given by the formula
\begin{align*} \bar{K}(p+c\eta(p)) = \frac{K(p)}{c^2K(p)+2cH(p)+1},
\end{align*}
for each $p \in M$ such that $p+c\eta(p)$ is a regular point of $\bar{M}$. 
\end{teo}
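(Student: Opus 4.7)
The plan is to apply Theorem~\ref{main} directly to the parallel surface $\bar{M}$ and reduce the area ratio to data already known on $M$. With $F(p)=p+c\eta(p)$, a local parametrization $\varphi:V\to D\subseteq M$ induces the parametrization $\bar{\varphi}:=F\circ\varphi=\varphi+c\,\eta\circ\varphi$ of $\bar{D}:=F(D)$, whose coordinate vector fields are $\bar{\varphi}_u=\varphi_u+c\,d\eta(\varphi_u)$ and $\bar{\varphi}_v=\varphi_v+c\,d\eta(\varphi_v)$.

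The main obstacle is the first step: identifying the Birkhoff-Gauss map of $\bar{M}$. The claim is that $\bar{\eta}\circ F=\eta$. This rests on two ingredients. First, $T_pM$ is the unique plane Birkhoff orthogonal to $\eta(p)$, because $\partial B$ is smooth and strictly convex. Second, the tangent plane to $\partial B$ at $\eta(p)$ is, by the very definition of the Birkhoff-Gauss map of $\partial B$ itself, also Birkhoff orthogonal to $\eta(p)$, hence parallel to $T_pM$; thus $d\eta_p$ takes values in the linear subspace underlying $T_pM$. Consequently both $\bar{\varphi}_u$ and $\bar{\varphi}_v$ lie in $T_pM$, so $T_{F(p)}\bar{M}$ is parallel to $T_pM$ and admits $\eta(p)$ as its Birkhoff-orthogonal unit vector. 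This identification immediately gives $\lambda_{\partial B}(\bar{\eta}(\bar{D}))=\lambda_{\partial B}(\eta(D))$.

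It remains to compute the area element on $\bar{M}$. Expressing $d\eta_p$ as a $2\times 2$ matrix in the basis $\{\varphi_u,\varphi_v\}$ of $T_pM$---its trace being $2H$ and its determinant $K$---multilinearity of the $\mathbb{R}^3$-determinant expands $\det(\bar{\varphi}_u,\bar{\varphi}_v,\eta)$ into four pieces: the unperturbed $\omega(\varphi_u,\varphi_v)$; two cross terms whose sum reads off the diagonal entries of $d\eta_p$ and contributes $2cH\,\omega(\varphi_u,\varphi_v)$; and a top-order term $c^{2}\,\det(d\eta(\varphi_u),d\eta(\varphi_v),\eta)=c^{2}K\,\omega(\varphi_u,\varphi_v)$. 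Altogether,
\[
\omega_{\bar{M}}(\bar{\varphi}_u,\bar{\varphi}_v)=(1+2cH+c^{2}K)\,\omega(\varphi_u,\varphi_v).
\]
Feeding this into Theorem~\ref{main} applied at $F(p)$, and invoking the mean value theorem for integrals exactly as in its proof to pass to the limit $\bar{D}\to F(p)$ (equivalently $D\to p$), gives the stated formula whenever $p+c\eta(p)$ is a regular point of $\bar{M}$, i.e.\ whenever $1+2cH(p)+c^{2}K(p)\neq 0$.
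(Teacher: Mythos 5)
Your proposal is correct and follows the same overall strategy as the paper: identify the Birkhoff normal of $\bar M$ at $p+c\eta(p)$ with $\eta(p)$, compute the area element of $\bar M$ as a multiple of that of $M$, and pass to the limit via Theorem~\ref{main} and the mean value theorem for integrals. The one genuine difference is in the middle step: the paper chooses a parametrization by curvature lines, so that $\psi_u=(1+c\lambda_1)\varphi_u$ and $\psi_v=(1+c\lambda_2)\varphi_v$ and the area factor is $(1+c\lambda_1)(1+c\lambda_2)$; this forces a separate continuity argument at umbilic points (where such a parametrization need not exist). You instead keep a general parametrization and expand $\det(\bar\varphi_u,\bar\varphi_v,\eta)$ by multilinearity, reading off the trace and determinant of $d\eta_p$ to get the factor $1+2cH+c^2K$ directly --- a computation that is basis-independent and so handles umbilic points with no extra case. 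The paper itself remarks that this is possible, so your route is the cleaner variant it alludes to; you also supply the justification (via $d\eta_p$ mapping into the direction plane of $T_pM$) for the assertion, stated without proof in the paper, that $M$ and $\bar M$ share the same Birkhoff normal at corresponding points. One caveat applies equally to both arguments: Theorem~\ref{main} is only available when the Gaussian curvature at the point in question is nonzero, so the case $K(p)=0$ strictly requires a separate (easy) argument, e.g.\ differentiating $\bar\eta\circ F=\eta$ to get $d\bar\eta_{F(p)}=d\eta_p\circ(I+c\,d\eta_p)^{-1}$ and taking determinants.
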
 
\begin{proof} As in the proof of Theorem \ref{main}, let $\varphi(u,v):V\rightarrow D$ be a local parametrization of a neighborhood of $p \in M$. If $p$ is not umbilic, then we can assume that $\varphi$ is a parametrization whose coordinate curves are curvature lines; that is, we can assume that
\begin{align*} d\eta(\varphi_u) = \lambda_1\varphi_u \ \ \mathrm{and} \\
d\eta(\varphi_v) = \lambda_2\varphi_v,
\end{align*}
where $\lambda_1$ and $\lambda_2$ are the principal curvatures of $M$. The map $\psi(u,v):V\rightarrow \bar{D} :=\{q+c\eta(q):q \in D\}$ defined as $\psi(u,v) = \varphi(u,v)+c\eta(u,v)$ is a local parametrization of the neighborhood $\bar{D}$ of $p+c\eta(p) \in \bar{M}$. We clearly have
\begin{align*} \psi_u = (1+c\lambda_1)\varphi_u \ \ \mathrm{and}\\
\psi_v = (1+c\lambda_2)\varphi_v,
\end{align*}
and also, that the Birkhoff normal to $M$ at $p$ is the same as the Birkhoff normal to $\bar{M}$ at $p+c\eta(p)$. Hence the area of $\bar{D}$ is calculated as
\begin{align*} \lambda_{\bar{M}}(\bar{D}) = \int_V\omega(\psi_u,\psi_v) \ dudv = \int_V(1+c\lambda_1)(1+c\lambda_2)\omega(\varphi_u,\varphi_v) \ dudv,
\end{align*}
and from the mean value theorem for integrals we get that
\begin{align*} \frac{\lambda_{\bar{M}}(\bar{D})}{\lambda(V)} = (1+c\lambda_1(q_1))(1+c\lambda_2(q_1))\omega_{q_1}(\varphi_u,\varphi_v),
\end{align*}
for some $q_1 \in V$. From equality (\ref{areaball}), and repeating the argument of the mean value theorem for the ratio $\frac{\lambda_{\partial B}(\eta(D))}{\lambda(V)}$, we calculate the Minkowski Gaussian curvature of $\bar{M}$ at $p+c\eta(p)$ as
\begin{align*} \bar{K}(p+c\eta(p)) = \lim_{D\rightarrow p}\frac{\lambda_{\partial B}(\eta(\bar{D}))}{\lambda_{\bar{M}}(\bar{D})} = \lim_{D\rightarrow p}\frac{\frac{\lambda_{\partial B}(\eta(D))}{\lambda(V)}}{\frac{\lambda_{\bar{M}}(\bar{D})}{\lambda(V)}} = \lim_{D\rightarrow p}\frac{K(q_2)\omega_{q_2}(\varphi_u,\varphi_v)}{(1+c\lambda_1(q_1))(1+c\lambda_2(q_1))\omega_{q_1}(\varphi_u,\varphi_v)} = \\ = \frac{K(p)}{(1+c\lambda_1(p))(1+c\lambda_2(p))} = \frac{K(p)}{c^2K(p)+2cH(p)+1},
\end{align*}
where we again use the fact that $q_1,q_2 \rightarrow p$ as $D \rightarrow p$. If $p$ is an isolated umbilic point, then we get the result using a continuity argument. As it is proved in \cite{Ba-Ma-Tei1}, open sets all whose points are umbilic must be contained in Minkowski spheres. 

\end{proof}

\section{Weyl's tube formula and intrinsic volumes}

Inspired by Theorem \ref{parallel}, we devote this section to obtaining an analogue of Weyl's tube formula, which characterizes the volume of the set of the points $\varepsilon$-next to a surface $M$ as a polynomial of degree $3$ in the variable $\varepsilon$ ($>0$, say). In what follows, the volume in $\mathbb{R}^3$ is given by the usual determinant. 

\begin{teo}[Weyl's tube formula]\label{weyl} Let $M$ be a surface in $(\mathbb{R}^3,||\cdot||)$, and let $M_{\varepsilon}$ be the $\varepsilon$\emph{-tube} defined as
\begin{align*} M_{\varepsilon} := \{z \in \mathbb{R}^3:\mathrm{dist}(z,M) \leq \varepsilon\},
\end{align*}
where $\mathrm{dist}(z,M) = \inf\{||z-p||: p\in M\}$. For sufficiently small $\varepsilon > 0$, the volume of $M_{\varepsilon}$ is given by the formula
\begin{align}\label{weyl} \mathrm{vol}(M_{\varepsilon}) = 2\varepsilon\lambda_M(M) + \frac{2\varepsilon^3}{3}\int K \omega,
\end{align}
where $\omega$ is the area element induced in $M$ as in \emph{(\ref{areaform})}. 
\end{teo}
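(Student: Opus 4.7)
The plan is to parametrize the tube $M_\varepsilon$ via the map
$\Phi : M \times [-\varepsilon,\varepsilon] \to \mathbb{R}^3$, $\Phi(p,t) = p + t\,\eta(p)$,
and then integrate its (usual Euclidean) Jacobian, exploiting the computation of the parallel-surface area element that is already implicit in the proof of Theorem \ref{parallel}. The formula will fall out once the $t$-integral has been carried out.

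First I would establish that, for $\varepsilon$ small enough, $\Phi$ is a diffeomorphism onto $M_\varepsilon$. The injectivity and local regularity come from the inverse function theorem applied at each $p \in M$: in curvature-line coordinates $\varphi(u,v)$ around $p$, one has $\Phi_u = (1+t\lambda_1)\varphi_u$, $\Phi_v = (1+t\lambda_2)\varphi_v$ and $\Phi_t = \eta$, which are linearly independent for all sufficiently small $|t|$ (since $K(p) \neq 0$ need not hold, one simply uses that $1+t\lambda_i > 0$ for $|t|$ below the reciprocal of the maximal principal curvature on compact pieces). Surjectivity onto $M_\varepsilon$ uses that Birkhoff orthogonality realizes the minimum distance: if $q \in M$ minimises $\|z-q\|$ for $z \in M_\varepsilon$, then the norm-ball $\{x : \|x-z\| \le \|z-q\|\}$ supports $M$ at $q$, so the vector $z-q$ is Birkhoff orthogonal to $T_qM$ and therefore $z-q = \pm \|z-q\|\,\eta(q)$. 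Hence $z = \Phi(q,t)$ with $|t| \le \varepsilon$.

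Next, in the curvature-line parametrization above, the (standard) volume element is
\begin{align*}
\det(\Phi_u,\Phi_v,\Phi_t) = (1+t\lambda_1)(1+t\lambda_2)\,\det(\varphi_u,\varphi_v,\eta) = \bigl(1 + 2tH + t^2 K\bigr)\,\omega(\varphi_u,\varphi_v),
\end{align*}
so by Fubini
\begin{align*}
\mathrm{vol}(M_\varepsilon) = \int_{-\varepsilon}^{\varepsilon} \int_M \bigl(1 + 2tH + t^2 K\bigr)\,\omega\,dt.
\end{align*}
The term linear in $t$ integrates to zero by the symmetry of $[-\varepsilon,\varepsilon]$, and the remaining integrals give $2\varepsilon\,\lambda_M(M) + \tfrac{2\varepsilon^3}{3}\int_M K\omega$, which is the stated formula. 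A global parametrization by curvature lines need not exist, but the volume is additive over a finite covering by such charts, and at umbilic points one falls back to a continuity argument exactly as in the proof of Theorem \ref{parallel}.

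The main technical point is the surjectivity step, namely that the foot of a nearest point projection in the norm lies in the Birkhoff-normal direction; all the rest is a linear-algebra expansion and the vanishing of the odd moment in $t$. The smallness of $\varepsilon$ needed is controlled by the infimum of $|1/\lambda_i|$ over the (assumed compact or locally compact) region of $M$ under consideration, ensuring positivity of the Jacobian and absence of focal crossings.
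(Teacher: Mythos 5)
Your proposal is correct and follows essentially the same route as the paper: parametrize the tube by $p+t\eta(p)$ in curvature-line coordinates, expand the Jacobian as $(1+2tH+t^2K)\,\omega$, apply Fubini so the odd term in $t$ vanishes, and globalize with a partition of unity and a continuity argument at umbilic points. The only difference is that you spell out the surjectivity of the tube parametrization via the Birkhoff-orthogonality of nearest-point directions, a step the paper leaves implicit (and relegates to a remark on the admissible size of $\varepsilon$).
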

\begin{proof} The idea of the proof is to see $M_{\varepsilon}$ as a family of parallel surfaces. We assume that $D \subseteq M$ is a coordinate neighborhood which can be parametrized as in Theorem \ref{parallel}. That is, we assume that $\phi(u,v):V\rightarrow D$ is such that 
\begin{align*} d\eta(\phi_u) = \lambda_1\phi_u \ \  \mathrm{and} \\
d\eta(\phi_v) = \lambda_2\phi_v,
\end{align*} 
where $\lambda_1$ and $\lambda_2$ are the principal curvatures of $M$. For the general case, we use continuity arguments and partitions of the unity. Given such a parametrization of a neighborhood $D$, for sufficiently small $\varepsilon > 0$ one can parametrize $D_{\varepsilon}:=\{z \in \mathbb{R}^3:\mathrm{dist}(z,D) \leq \varepsilon\}$ as
\begin{align*} \varphi(u,v,t) = \phi(u,v) + t\eta(u,v),
\end{align*}
for $(u,v,t) \in V\times(-\varepsilon,\varepsilon)$. Denote by $K$ and $H$ the Minkowski Gaussian and mean curvatures of $M$ at $(u,v)$, and write $(u,v,t) = x$ for simplicity. Hence the volume of $D_{\varepsilon}$ is calculated as
\begin{align*} \mathrm{vol}(D_{\varepsilon}) = \int_{V\times(-\varepsilon,\varepsilon)}\mathrm{det}(\varphi_u,\varphi_v,\varphi_t) \ dx = \int_{V\times(-\varepsilon,\varepsilon)}(1+t\lambda_1)(1+t\lambda_2)\mathrm{det}(\phi_u,\phi_v,\eta) \ dx = \\ = \int_{V\times(-\varepsilon,\varepsilon)}(1+2tH+t^2K)\omega(\phi_u,\phi_v) \ dx = \\ = 2\varepsilon \int_D \omega + \left(\int_{-\varepsilon}^{\varepsilon}2t \ dt\right) \cdot \left(\int_VH \omega\right) + \frac{2\varepsilon^3}{3}\int_{D}K \omega = \\ = 2\varepsilon\lambda_M(D) + \frac{2\varepsilon^3}{3}\int_{D}K \omega,
\end{align*}
where we used Fubini's theorem. Of course, using partitions of the unity we get equality for $M_{\varepsilon}$ (as we mentioned previously). 

\end{proof}

\begin{remark} In the previous theorem, and also in Theorem \ref{parallel}, the choice of a parametrization whose coordinate curves are curvature lines is not necessary. We use this only to make the calculations easier. This comes with the disadvantage of needing a continuity argument to deal with umbilic points. 
\end{remark}

\begin{remark} We can actually estimate how small $\varepsilon > 0$ has to be such that Weyl's formula works. It suffices that
\begin{align*} \varepsilon < \frac{1}{\max_{p\in M}\{\max\{\lambda_1(p),\lambda_2(p)\}\}}.
\end{align*}
If the inequality above is true, then local parametrizations of the tubes that we used to prove the formula are injective. Indeed, in this case the points of $M_{\varepsilon}$ do not ``reach" the \emph{medial axis} of $M$ (see the proof of Theorem \ref{volumeestimate}).
\end{remark}

In the following corollary we will use the notion of mixed volumes of a convex body and the unit ball which are common in classical convexity and have two representations in the literature: as quermassintegrals and (used by us) \emph{intrinsic volumes}; see Section 3 in the survey \cite{Sang} and Section 4.2 in \cite{Schn}.

\begin{coro} Assume that $M$ is a compact surface without boundary which encloses a convex region, and recall that we denote by $B$ the unit ball of $(\mathbb{R}^3,||\cdot||)$. Also, denote by $V_3$ the usual volume given in $\mathbb{R}^3$ by the determinant. Then, in the Steiner formula
\begin{align} \label{steiner} V_3(M+\rho B) = \sum_{j=0}^3\rho^{n-j}\binom{n}{j}V_j(M,B),
\end{align}  
we have that, as in the Euclidean case, the intrinsic volume $V_2(M,B)$ equals the Minkowski surface area of $M$. Also, all of the intrinsic volumes $V_j(M,B)$ are given by quantities defined in terms of the geometry induced by the ambient norm $||\cdot||$ in $M$.
\end{coro}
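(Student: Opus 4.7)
The plan is to apply Weyl's tube formula (Theorem \ref{weyl}) in a one-sided fashion and then match coefficients with the Steiner expansion. Since $M$ encloses a convex body $K$, the Minkowski sum $M+\rho B$ agrees with the outer parallel body $K+\rho B$, so I would decompose
\[
V_3(M+\rho B) = V_3(K) + V_3(N_\rho),
\]
where $N_\rho := (K+\rho B)\setminus K$ is the ``outer shell''. The goal is to show that $V_3(N_\rho)$ is a polynomial in $\rho$ whose coefficients are geometric integrals on $M$, and then read off the $V_j(M,B)$.

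To compute $V_3(N_\rho)$, I would reuse the parametrization machinery from the proof of Theorem \ref{weyl}, restricted to $t\in[0,\rho]$: cover $M$ by local parametrizations $\phi(u,v):V\to D$ whose coordinate curves are curvature lines, and set $\psi(u,v,t) = \phi(u,v) + t\eta(u,v)$. The Jacobian of $\psi$ is $(1+t\lambda_1)(1+t\lambda_2)\omega(\phi_u,\phi_v) = (1+2tH+t^2K)\omega(\phi_u,\phi_v)$, exactly as in Theorem \ref{weyl}. Integrating $t$ first over $[0,\rho]$ and assembling the local contributions by a partition of unity (with the usual continuity argument at umbilics), I obtain
\[
V_3(M+\rho B) = V_3(K) + \rho\,\lambda_M(M) + \rho^2\!\int_M H\,\omega + \frac{\rho^3}{3}\int_M K\,\omega.
\]

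Comparing this expression with the Steiner formula \eqref{steiner} and matching coefficients of $\rho^{3-j}$ yields explicit identifications of every intrinsic volume in terms of data intrinsic to $M$ and its Birkhoff-Gauss map: $V_3(M,B)=V_3(K)$, while $V_2(M,B)$, $V_1(M,B)$ and $V_0(M,B)$ are proportional (through the binomial factors) to $\lambda_M(M)$, $\int_M H\,\omega$ and $\int_M K\,\omega$, respectively. In particular $V_2(M,B)$ is the Minkowski surface area of $M$, as claimed, and each $V_j(M,B)$ is manifestly defined by quantities coming from the norm $||\cdot||$ through $\eta$, $\omega$, $H$ and $K$.

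The main obstacle is to justify the parametrization $\psi$ \emph{globally} on $M\times[0,\rho]$ rather than only for $\rho$ small (as in Weyl's formula, which held in a thin tube only). Here convexity of $K$ saves us: with the outward-pointing Birkhoff normal $\eta$, the principal curvatures $\lambda_1,\lambda_2$ are nonnegative, so the factor $(1+t\lambda_1)(1+t\lambda_2)$ never vanishes for $t\geq 0$, and the medial-axis obstruction discussed in the remark after Theorem \ref{weyl} is pushed to the interior of $K$, not into $N_\rho$. Hence $\psi$ is a diffeomorphism onto $N_\rho$ for every $\rho>0$, and the polynomial identity above is valid in the range required by the Steiner formula.
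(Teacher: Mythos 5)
Your proposal is correct and follows essentially the same route as the paper: parametrize only the outer portion of the tube as in Theorem \ref{weyl}, obtain the polynomial $V_3(M)+\rho\lambda_M(M)+\rho^2\int_M H\,\omega+\frac{\rho^3}{3}\int_M K\,\omega$, and match coefficients with the Steiner expansion (the paper is content with small $\rho$, which already determines the polynomial coefficients, whereas you additionally justify validity for all $\rho>0$ via convexity). One small point: avoid reusing the letter $K$ for the enclosed convex body, since it already denotes the Minkowski Gaussian curvature.
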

\begin{proof} Procceeding as in the proof of Theorem \ref{weyl}, but parametrizing only the ``outer" portion of the tube, we get immediately
\begin{align*} V_3(M+\rho B) = V_3(M) + \rho\lambda_M(M) + \rho^2\int_MH\omega + \frac{\rho^3}{3}\int_MK\omega,
\end{align*}
where $\rho$ is sufficiently small. We emphasize that the intrinsic volumes are not just given by the geometry induced in $M$ by the norm, but that this happens in a way completely analogous to the Euclidean case. Also, notice that we are not normalizing the intrinsic volumes by the successive volumes of $n$-dimensional balls since this is not completely natural in the Minkowski case. 

\end{proof}
\begin{open} Is there any extension of all this for the case that the unit ball can be an arbitrary convex body no longer centered at the origin (and then called a \emph{gauge})? To answer that question and similar problems, it is certainly necessary to develop a theory called "differential geometry of gauges"; see, e.g., \cite{Gugg}.

\end{open}

It is worth mentioning that something similar holds for convex curves in a Minkowski plane $X$. In that case, the intrinsic volume $V_1$ equals the length in the \emph{anti-norm}, which is the dual norm induced in $X$ under the identification of $X$ and $X^*$ given by the fixed area form. We define it more precisely in Section \ref{total}, and we refer the reader to \cite{Ma-Swa} for more in this direction. 

\section{Total Minkowski curvature} \label{total}

It is well known that in classical differential geometry the total Gaussian curvature of a compact surface with positive Gaussian curvature equals the area of the unit sphere. We will extend this to the Minkowski context and, as a consequence, prove an analogue of Willmore's theorem. This theorem gives a lower bound for the total integral of the squared mean curvature of a compact, embedded surface without boundary. The optimal value characterizes round spheres. \\

We continue working with the area induced by Birkhoff-Gauss map (and the usual determinant in $\mathbb{R}^3$) as in (\ref{areaform}). In this case, the area of the unit sphere $\partial B$ is given by
\begin{align*} \lambda(\partial B) := \int_{\partial B}\omega.
\end{align*}
In what follows, we say that a surface is \emph{closed} whenever it is compact and without boundary. For the sake of simplicity, all of the immersions are assumed to be embeddings. 

\begin{teo}\label{totalgauss} Let $f:M\rightarrow(\mathbb{R}^3,||\cdot||)$ be a closed surface with positive Minkowski Gaussian curvature. Then
\begin{align*} \int_MK\omega = \lambda(\partial B).
\end{align*}
\end{teo}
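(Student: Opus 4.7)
The plan is to interpret the identity as a global change-of-variables statement applied to the Birkhoff-Gauss map $\eta : M \to \partial B$. The pointwise ingredient is already present in the proof of Theorem \ref{main}: because $d\eta_p$ sends $T_pM$ onto the parallel plane $T_{\eta(p)}\partial B$, equation (\ref{areaball}) yields the pullback identity $\eta^{*}\omega_{\partial B} = K\,\omega$ on every coordinate patch. Consequently, as soon as $\eta$ is known to be a global diffeomorphism, the usual change-of-variables formula gives
\[
\int_M K\,\omega \;=\; \int_M \eta^{*}\omega_{\partial B} \;=\; \int_{\partial B}\omega_{\partial B} \;=\; \lambda(\partial B),
\]
and the theorem follows.

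The first step is therefore to check that $\eta$ is a local diffeomorphism; this is immediate from $\det d\eta_p = K(p) > 0$ everywhere on $M$ together with the inverse function theorem. Next, the image $\eta(M)$ is open in $\partial B$ (local diffeomorphism), and also closed, since $M$ is compact and $\partial B$ is Hausdorff. Connectedness of $\partial B$ then forces $\eta(M) = \partial B$, and a surjective local diffeomorphism between closed surfaces of the same dimension is automatically a finite-sheeted covering map.

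The main obstacle is controlling the degree of this covering. Because $\partial B$ is the boundary of a smooth, strictly convex body in $\mathbb{R}^3$, it is diffeomorphic to $S^2$ and hence simply connected, so any covering of $\partial B$ by a connected space is a homeomorphism; thus $\eta$ is in fact a diffeomorphism. (This relies on the tacit assumption that $M$ is connected, which is standard for a ``closed surface''; otherwise one would only get $\int_M K\,\omega = n\,\lambda(\partial B)$ for the number $n$ of components, and a Hadamard-type argument using the embeddedness of $M$ and the common outward direction of the Birkhoff normals would be required to rule out $n>1$.) Once the bijectivity of $\eta$ is in hand, the change-of-variables calculation displayed above finishes the proof.
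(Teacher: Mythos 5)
Your proposal is correct and follows essentially the same route as the paper: establish that the Birkhoff--Gauss map $\eta$ is a global diffeomorphism onto $\partial B$ and then apply the change-of-variables formula with $\eta^{*}\omega_{\partial B}=K\,\omega$. The only difference is that the paper simply cites Theorem 3.2 of the reference on immersed surfaces for the diffeomorphism property, whereas you supply a self-contained covering-space argument (local diffeomorphism, open-and-closed image, simple connectedness of $\partial B\cong S^2$), correctly flagging the tacit connectedness assumption on $M$.
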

\begin{proof} Since $M$ is compact and its Minkowski Gaussian curvature is positive, it follows that the Birkhoff-Gauss map $\eta:M\rightarrow\partial B$ is a diffeomorphism (see \cite[Theorem 3.2]{Ba-Ma-Tei1}). Hence
\begin{align*} \int_MK\omega = \int_M\mathrm{det}(d\eta)\cdot\omega = \int_{\partial B}\omega = \lambda(\partial B),
\end{align*}
where the second equality comes from the standard changing of variables formula, and from the fact that, for each $p \in M$, the Birkhoff normal is the same for $M$ at $p$ and $\partial B$ at $\eta(p)$

\end{proof}

\begin{remark} The same argument can be used to show that the total circular curvature of any simple, closed, strictly convex curve in $(\mathbb{R}^2,||\cdot||)$ equals the Minkowski length of the unit circle of the norm $||\cdot||$. Indeed, if $\gamma$ is such a curve, then we can assume that the map $t(s)$ defined in Remark \ref{curve} is a bijection from $[0,l(\gamma)]$ onto $[0,l(\partial B)]$, where $l$ stands for the Minkowski length, and hence
\begin{align*} \int_0^{l(\gamma)}k_c(s) \ ds = \int_0^{l(\gamma)}t'(s) \ ds = l(\partial B).
\end{align*}
\end{remark}

Clearly, Theorem \ref{totalgauss} can be extended to the case when $M$ has isolated points where the Minkowski Gaussian curvature vanishes. We will need this extension to prove the announced Willmore's theorem next. 

\begin{teo} If $f:M\rightarrow(\mathbb{R}^3,||\cdot||)$ is a closed surface whose Minkowski mean curvature is denoted by $H$, then
\begin{align*} \int_MH^2\omega \geq \lambda(\partial B),
\end{align*}
and equality holds if and only if $M$ is a round sphere. 
\end{teo}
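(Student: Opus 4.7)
The approach mirrors the classical Willmore proof, adapted to the Minkowski setting. The key pointwise ingredient is the inequality $H^2 \geq K$, which is immediate because $d\eta_p$ is self-adjoint with real eigenvalues $\lambda_1, \lambda_2$, giving
$$H^2 - K = \Big(\frac{\lambda_1 - \lambda_2}{2}\Big)^2 \geq 0,$$
with equality precisely at umbilic points. Split $M = M_+ \cup M_-$ where $M_+ := \{K \geq 0\}$ and $M_- := \{K < 0\}$. Discarding $\int_{M_-} H^2 \omega \geq 0$ and applying $H^2 \geq K$ on $M_+$ yields
$$\int_M H^2 \omega \;\geq\; \int_{M_+} H^2 \omega \;\geq\; \int_{M_+} K \omega,$$
so it suffices to prove $\int_{M_+} K\omega \geq \lambda(\partial B)$.

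The next step is to show surjectivity of $\eta|_{M_+}$ onto $\partial B$ via a support-function argument. For any $n \in \partial B$, set $\nu = u^{-1}(n)$, the Euclidean outward unit normal to $\partial B$ at $n$; since $M$ is compact, the linear functional $x \mapsto \langle x, \nu\rangle$ attains its maximum on $M$ at some point $p$. At $p$ the Euclidean normal to $M$ equals $\nu$, whence $\eta(p) = u(\nu) = n$; moreover $M$ lies locally on one side of $T_p M$, so the Euclidean Gauss map is locally orientation-preserving there, and composing with the orientation-preserving diffeomorphism $u$ gives $K(p) = \det(d\eta_p) \geq 0$, that is, $p \in M_+$. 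Proceeding exactly as in the proof of Theorem \ref{totalgauss}, the pull-back identity $\eta^*\omega = K\omega$ combined with the area formula yields
$$\int_{M_+} K\omega \;=\; \int_{\partial B} \#\bigl(\eta|_{M_+}^{-1}(y)\bigr)\,\omega(y) \;\geq\; \lambda(\partial B),$$
and combining this with the previous display establishes the inequality.

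For the equality case, assume $\int_M H^2\omega = \lambda(\partial B)$. Each of the three inequalities above must collapse, forcing $H \equiv 0$ on $M_-$, $H^2 \equiv K$ (hence umbilicity) throughout $M_+$, and $\eta|_{M_+}$ essentially injective. Since $\int_{M_+} K\omega = \lambda(\partial B) > 0$, the open set $\{K > 0\}$ is non-empty; fix a connected component $C$. By the umbilic-rigidity result of \cite{Ba-Ma-Tei1}, $C$ lies in some Minkowski sphere $S$ of radius $r$, on which $K \equiv 1/r^2$. If $p$ were a boundary point of $C$ in $M$, then $p \notin \{K > 0\}$ would give $K(p) \leq 0$, yet continuity along a sequence in $C$ forces $K(p) = 1/r^2 > 0$—contradiction. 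Hence $C$ is both open and closed in the (connected) $M$, so $M = C \subseteq S$, and since both are closed surfaces, $M = S$. Conversely, on a Minkowski sphere of radius $r$ one has $d\eta = (1/r)I$, so $H \equiv 1/r$ and the total area equals $r^2\lambda(\partial B)$ by the scaling corollary, giving $\int_M H^2\omega = \lambda(\partial B)$. The main obstacle is the sign analysis at the support maximum showing $K(p)\geq 0$, which delicately couples Euclidean one-sided convexity with the orientation properties of $u$; in the equality case, the analogous difficulty is upgrading the local "on-a-sphere" conclusion to the whole surface via the cited rigidity statement.
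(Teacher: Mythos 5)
Your proof is correct and follows the same overall strategy as the paper's: reduce to showing that the Birkhoff--Gauss map restricted to the set where $K\geq 0$ covers $\partial B$, deduce $\int_{\{K\geq 0\}}K\,\omega\geq\lambda(\partial B)$, dominate by $\int_M H^2\omega$, and settle equality via umbilicity and the rigidity of Minkowski spheres. Still, three of your choices differ in ways worth recording. First, you integrate $H^2\geq K$ only over $M_+=\{K\geq 0\}$ and simply discard $\int_{M_-}H^2\omega\geq 0$; the paper instead runs the chain $\int_M H^2\omega\geq\int_M|K|\,\omega\geq\int_M K^+\omega$, which rests on the pointwise claim $H^2\geq|K|$ -- a claim that fails where $K<0$ (take $\lambda_1=-\lambda_2\neq 0$, so $H=0$ but $|K|>0$). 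Your restriction to $M_+$ is therefore not merely cosmetic: it is the standard (and correct) repair of that step, at the small price of having to rule out $M_-$ later rather than getting $K\geq 0$ for free from strictness of the middle inequality. Second, you prove the surjectivity of $\eta|_{M_+}$ by the support-function argument (maximizing $\langle x,\nu\rangle$ and using local one-sidedness plus Proposition \ref{gaussratio} to get $K(p)\geq 0$), whereas the paper merely cites the Euclidean fact and transfers it via the sign comparison of $K$ and $K_M$; your version is self-contained. Third, your equality analysis goes through a clopen argument on a component of $\{K>0\}$ together with the umbilic-rigidity statement, while the paper argues directly that $K\geq 0$ everywhere and then that $H^2-K\geq 0$ with vanishing integral forces global umbilicity; both work, yours implicitly using connectedness of $M$ (harmless for a closed surface as the paper uses the term). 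Note finally that, like the paper's own proof, your argument characterizes equality by \emph{Minkowski} spheres, which is what the cited Proposition 4.5 of \cite{Ba-Ma-Tei1} delivers; the word ``round'' in the statement should be read in that sense.
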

\begin{proof} For each $p \in M$, denote $K^+(p) = \max\{K(p),0\}$. The image, under the Birkhoff-Gauss map, of the set of points in $M$ where the Minkowski Gaussian curvature is non-negative covers the whole of $\partial B$ (this is known in the Euclidean case, and the Minkowski case comes immediately as a consequence, since the Euclidean and Minkowski Gaussian curvatures always have the same sign). Hence, using the methods of the proof of Theorem \ref{totalgauss}, we get
\begin{align*} \int_MK^+\omega \geq \lambda(\partial B).
\end{align*}
From the arithmetic-geometric mean inequality we get immediately that $H^2 \geq |K|$, and then
\begin{align*} \int_MH^2\omega \geq \int_M|K|\omega \geq \int_MK^+\omega \geq \lambda(\partial B),
\end{align*}
which gives the inequality. We assume now that the equality holds. In this case, we must have $K \geq 0$, since the existence of a point where $K < 0$ would lead the second inequality above to be strict. We have
\begin{align*} \int_MH^2\omega = \int_MK\omega = \lambda(\partial B),
\end{align*}
and since $H^2 - K\geq 0$, by continuity we get that $H^2 = K$ at every point of $M$. Consequently, every point of $M$ is umbilic, and therefore $M$ is a Minkowski sphere (see \cite[Proposition 4.5]{Ba-Ma-Tei1}). The converse is obvious.

\end{proof}

Similarly to the Euclidean case, one can also bound the volume of the region enclosed by a compact, embedded surface without boundary in terms of its Minkowski mean curvature. 

\begin{teo}\label{volumeestimate} Let $M$ be a compact surface without boundary embedded in the Minkowski space $(\mathbb{R}^3,||\cdot||)$, whose Minkowski mean curvature is positive everywhere, and let $D$ be the volume of the region enclosed by it. Then we have the estimate
\begin{align*} \int_{M}\frac{1}{H}\cdot\omega \geq 3\mathrm{vol}(D),
\end{align*}
where $\mathrm{vol}(D)$ denotes the usual volume of the region $D$. Equality holds if and only if $M$ is a Minkowski sphere.
\end{teo}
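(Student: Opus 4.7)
My plan is to mimic the classical Heintze--Karcher argument, adapted to the Birkhoff setting. The strategy is to parametrize the enclosed region by the inward Birkhoff normal, rewrite $\mathrm{vol}(D)$ using the Jacobian already computed for parallel surfaces in the proof of Theorem \ref{weyl}, and then apply the arithmetic--geometric mean inequality $(1-t\lambda_1)(1-t\lambda_2) \leq (1-tH)^2$ to obtain the estimate.

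Concretely, I would introduce $\Phi: M \times [0,\infty) \to \mathbb{R}^3$, $\Phi(p,t) = p - t\eta(p)$, and first show that $\Phi$ surjects onto $D$. For any $z \in D$, a closest point $p_z \in M$ exists by compactness; a first-order variation of $p \mapsto ||z-p||$ along $T_{p_z}M$ forces $z - p_z$ to be Birkhoff orthogonal to $T_{p_z}M$, hence of the form $-t \eta(p_z)$ with $t \geq 0$ (the sign coming from $z$ lying inside $M$). Defining $c(p)$ as the supremum of $t$ for which $\Phi(p,\cdot)|_{[0,t]}$ still produces closest-point minimizers, the set where uniqueness fails (the Birkhoff medial axis) has measure zero, and away from it $\Phi$ is a diffeomorphism onto its image. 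This yields
\[ \mathrm{vol}(D) = \int_M \int_0^{c(p)} |\det d\Phi(p,t)| \, dt\, \omega(p). \]

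Using curvature-line coordinates exactly as in the proof of Theorem \ref{weyl}, one finds $|\det d\Phi(p,t)| = (1-t\lambda_1(p))(1-t\lambda_2(p))$, and moreover $c(p) \leq 1/\max\{\lambda_1(p), \lambda_2(p)\} \leq 1/H(p)$ (beyond the first focal time the Jacobian changes sign, which is incompatible with $p$ remaining a closest point on $M$). On $[0, c(p)]$ both factors $(1-t\lambda_i)$ are therefore nonnegative, AM--GM gives $(1-t\lambda_1)(1-t\lambda_2) \leq (1-tH)^2$, and a direct antiderivative computation yields
\[ \int_0^{c(p)} (1-tH)^2 \, dt = \frac{1 - (1-c(p)H(p))^3}{3H(p)} \leq \frac{1}{3H(p)}, \]
using $c(p)H(p) \leq 1$. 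Integrating over $M$ delivers $3\, \mathrm{vol}(D) \leq \int_M \frac{\omega}{H}$. For the equality case, equality in AM--GM forces $\lambda_1(p) = \lambda_2(p)$ at every $p \in M$, so $M$ is totally umbilic, and Proposition 4.5 of \cite{Ba-Ma-Tei1} then identifies $M$ as a Minkowski sphere; conversely, for a sphere of radius $r$ one has $H \equiv 1/r$ and $c \equiv r$, which gives equality in both estimates.

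The main obstacle I anticipate is the setup of the inward normal parametrization: specifically, the first-order characterization of closest points via Birkhoff orthogonality, and the measure-theoretic control of the Birkhoff medial axis. These are classical in the Euclidean case, but in the Minkowski setting they must be established through the duality between Birkhoff orthogonality and supporting hyperplanes of the unit ball, and through the observation that the focal-point bound $c(p) \leq 1/\max\{\lambda_i(p)\}$ forces the parametrization to remain regular. The Jacobian computation itself is essentially free, being a reprise of the one carried out in Theorem \ref{weyl}.
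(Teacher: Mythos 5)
Your proposal is correct and follows essentially the same route as the paper: parametrize $D$ by $p - t\eta(p)$ up to the cut/medial-axis time, compute the Jacobian $(1-t\lambda_1)(1-t\lambda_2)$ in curvature-line coordinates, bound the cut time by $1/\max\{\lambda_1,\lambda_2\} \leq 1/H$, and apply AM--GM to get $(1-t\lambda_1)(1-t\lambda_2) \leq (1-tH)^2$, with equality forcing umbilicity and hence a Minkowski sphere. The only cosmetic difference is that you spell out the closest-point/Birkhoff-orthogonality and medial-axis details that the paper dispatches by citing the Euclidean argument and Theorem 8.3 of the curvature paper.
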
 
\begin{proof} Let $\eta$ be the (outward-pointing) Birkhoff normal of $M$, and for each $p \in M$, let $h(p)$ be the supremum of the distance that one can travel in the direction of $-\eta$ until a point $q$ is reached for which $p$ is no longer the unique point of $M$ such that $\mathrm{dist}(q,M) = ||p-q||$. We let
\begin{align*} D_{\varepsilon} := \{p - \varepsilon h(p)\eta:p \in M, \ \varepsilon \in [0,1]\}.
\end{align*}
The points of $D\setminus D_{\varepsilon}$ form the \emph{medial axis} of $M$. Adapting the proof of the Euclidean case, one can easily see that $\bar{D}_{\varepsilon} = D$, from which we get $\mathrm{vol}(D_{\varepsilon}) = \mathrm{vol}(D)$, since clearly $\mathrm{int}(\bar{D}_{\varepsilon}\setminus D_{\varepsilon}) = \emptyset$. Using local parametrizations of $M$, and a partition of the unity, we get, as in Theorem \ref{weyl} (but with a different sign, since our normal vector points outwards the parametrized region), the equality 
\begin{align*} \mathrm{vol}(D) = \mathrm{vol}(D_{\varepsilon}) = \int_M\left(\int_0^{h(p)}|(1-t\lambda_1)(1-t\lambda_2)| \ dt\right)\cdot \omega, 
\end{align*}
where, as before, $\lambda_1$ and $\lambda_2$ denote the principal Minkowski curvatures of $M$. Moreover, as in the Euclidean case, we have 
\begin{align*} \frac{1}{h(p)} \geq \max\{\lambda_1(p),\lambda_2(p)\} \ (\geq H).
\end{align*}
This can be directly checked by considering intersections of $M$ with normal planes and applying \cite[Theorem 8.3]{Ba-Ma-Sho}. Hence we get 
\begin{align*} 0 \leq (1-t\lambda_1)(1-t\lambda_2) = 1-2Ht+Kt^2 \leq 1-2Ht+H^2t^2 = (1-Ht)^2,
\end{align*}
for any $0 \leq t \leq h(p)$. Finally,
\begin{align*} \mathrm{vol}(D) = \int_M\left(\int_0^{h(p)}(1-t\lambda_1)(1-t\lambda_2) \ dt\right)\cdot \omega \leq \int_M\left(\int_0^{1/H}(1-Ht)^2 \ dt\right)\cdot \omega = \\  = \frac{1}{3}\int_M\frac{1}{H}\cdot \omega, 
\end{align*} 
as we wanted to prove. Clearly, equality holds if and only if $H^2 = K$. This characterizes Minkowski spheres.

\end{proof}

There is a two-dimensional version of Theorem \ref{volumeestimate}. To state and prove that, we need some preliminary theory. Let $(X,||\cdot||)$ be a normed plane, and endow $X$ with a nondegenerate symplectic form $\omega:X\times X\rightarrow\mathbb{R}$. This induces an area element and an orientation in $X$. Also, the natural identification of $X$ and its dual $X^*$ given by $\omega$ induces the so-called \emph{anti-norm} of $||\cdot||$ as
\begin{align*}||x||_a := \sup\{\omega(x,y):y \in S\},
\end{align*}
for any $x \in X$, where $S$ denotes the unit circle of $(X,||\cdot||)$. For more on this construction, we refer the reader to \cite{Ma-Swa}. In what follows, we still denote by $k_c$ the circular curvature \emph{in the norm}. 

\begin{teo}Let $\gamma(s)$ be a closed, smooth, simple curve parametrized by the arc-length in the anti-norm, and with everywhere positive circular curvature. Denoting by $\lambda(D)$ the area of the region $D$ enclosed by $\gamma$, we have the estimate
\begin{align*}2\lambda(D) \leq \int_0^{l_a(\gamma)}\frac{1}{k_c(s)} \ ds,
\end{align*}
where $l_a(\gamma)$ is the length of $\gamma$ measured in the anti-norm. Equality holds if and only if $\gamma$ is a circle of the norm $||\cdot||$. 
\end{teo}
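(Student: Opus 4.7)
The strategy is to mimic the proof of Theorem \ref{volumeestimate} one dimension down, parametrizing $D$ by the inward normal flow from $\gamma$ and bounding the time parameter by the reciprocal of the circular curvature. Let $\eta(s)$ be the outward Birkhoff normal at $\gamma(s)$, and for each $s$ let $h(s)>0$ be the largest $\tau$ such that $\gamma(s)-\tau\eta(s)$ still has $\gamma(s)$ as its unique nearest point on $\gamma$. The map $\Phi(s,t):=\gamma(s)-t\eta(s)$ then covers $D$ up to the medial axis (a set of measure zero) as $(s,t)$ ranges over $\{(s,t):s\in[0,l_a(\gamma)],\ 0\le t\le h(s)\}$.

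The first ingredient is the Jacobian formula $\omega(\Phi_s,\Phi_t)=1-tk_c(s)$. This combines (i) the identity $\eta'(s)=k_c(s)\gamma'(s)$, obtained from the corresponding norm-arc-length identity of Remark \ref{curve} by the chain rule (the scaling factor between the two arc-length parametrizations appears on both sides and cancels); and (ii) the identity $\omega(\gamma'(s),\eta(s))=-1$. For (ii), note that $\|\gamma'(s)\|_a=\sup_{y\in S}\omega(\gamma'(s),y)=1$ is attained at the unique $y^{\ast}\in S$ where the support line of $B$ is parallel to $\gamma'(s)$; Birkhoff orthogonality of $\eta$ to $\gamma'$ then forces $y^{\ast}=\pm\eta(s)$, and the orientation convention (with $\gamma$ positively oriented around $D$ and $\eta$ outward) fixes the sign so that $\omega(\gamma',\eta)=-1$.

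The second ingredient is the bound $h(s)\le 1/k_c(s)$, which is the planar analogue of the inequality used in Theorem \ref{volumeestimate}: since $1/k_c(s)$ is the radius of the osculating Minkowski circle at $\gamma(s)$ (Remark \ref{curve}), the unique-nearest-point property cannot persist past that radius, and this can be extracted from the two-dimensional version of \cite[Theorem 8.3]{Ba-Ma-Sho}. Combining these ingredients,
\[
\lambda(D)=\int_0^{l_a(\gamma)}\int_0^{h(s)}(1-tk_c(s))\,dt\,ds\le\int_0^{l_a(\gamma)}\int_0^{1/k_c(s)}(1-tk_c(s))\,dt\,ds=\int_0^{l_a(\gamma)}\frac{1}{2k_c(s)}\,ds,
\]
which is the desired estimate. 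Equality forces $h(s)=1/k_c(s)$ for every $s$, so that every inward normal reaches the center of its osculating Minkowski circle at the same distance; this common point must then be independent of $s$, forcing $\gamma$ to be a Minkowski circle about it.

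The main obstacle is the clean verification of the sign identity $\omega(\gamma',\eta)=-1$, together with the planar medial-axis bound $h(s)\le 1/k_c(s)$; the rest reduces to routine integration and a straightforward equality analysis.
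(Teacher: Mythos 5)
Your proposal is correct and follows essentially the same route as the paper: both parametrize $D$ by the inward normal flow $\gamma(s)-t\eta(s)$ up to the medial axis, use $\eta'(s)=k_c(s)\gamma'(s)$ together with the normalization $\omega(\eta,\gamma')=1$ to get the Jacobian $1-tk_c(s)$, bound $h(s)\leq 1/k_c(s)$ via the osculating-circle argument from \cite[Theorem 8.3]{Ba-Ma-Sho}, and conclude by integration with the same equality analysis. The only cosmetic difference is that the paper obtains $\eta'(s)=k_c(s)\gamma'(s)$ directly from equation (4.5) and Proposition 4.1 of \cite{Ba-Ma-Sho} rather than by a reparametrization of the norm-arc-length identity, but the content is the same.
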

\begin{proof} For simplicity, assume that $X$ is identified with $\mathbb{R}^2$, and that $\omega$ is the usual determinant in $\mathbb{R}^2$. Also, suppose that the orientation of $\gamma$ is counterclockwise. For each $s$, let $\eta(s)$ be the outward-pointing left Birkhoff normal unit vector to $\gamma$ at $\gamma(s)$, that is, $\eta(s) \dashv_B \gamma'(s)$ and $\omega(\eta,\gamma') = ||\eta||\cdot||\gamma'||_a = 1$ (see \cite{Ma-Swa}). Combining equation (4.5) in \cite{Ba-Ma-Sho} and Proposition 4.1 in the same paper, we get 
\begin{align}\label{eq1} \eta'(s) = k_c(s)\gamma'(s).
\end{align}
Similarly to what we have done in Theorem \ref{volumeestimate}, for each $s \in [0,l_a(\gamma)]$ let $h(s)$ be the supremum of the distance that one can travel in the direction of $-\eta$ until a point $q$ is reached for which $\gamma(s)$ is no longer the unique point of $\gamma$ such that $\mathrm{dist}(q,\gamma) = ||p-q||$. We let
\begin{align*} D_{\varepsilon} := \{p - \varepsilon h(p)\eta:p \in M, \ \varepsilon \in [0,1]\},
\end{align*}
and it is clear that $D\setminus D_{\varepsilon}$ is the medial axis of $\gamma$. Hence $\bar{D}_{\varepsilon} = D$. We parametrize $D_{\varepsilon}$ by $\gamma(s) -t\eta(s)$, where $(s,t) \in [0,l_a(\gamma)]\times[0,h(s)]$. Since $1/k_c(s)$ is the radius of an osculating Minkowski circle attached to $\gamma$ at $\gamma(s)$, we have the inequality 
\begin{align*} \frac{1}{h(s)} \geq k_c(s),
\end{align*}   
for any $s \in [0,l_a(\gamma)]$. As in Theorem \ref{volumeestimate}, this comes as a consequence of \cite[Theorem 8.3]{Ba-Ma-Sho}. Therefore,
\begin{align*} \lambda(D) = \lambda(D_{\varepsilon}) = \int_0^{l_a(\gamma)}\int_0^{h(s)}|\omega(\eta,\gamma'-t\eta')| \ dtds = \lambda(D_{\varepsilon}) = \int_0^{l_a(\gamma)}\int_0^{h(s)}1 - tk_c(s) \ dtds = \\ = \int_0^{l_a(\gamma)}\int_0^{1/k_c(s)}1-tk_c(s) \ dtds = \frac{1}{2}\int_0^{l_a(\gamma)}\frac{1}{k_c(s)} \ ds,
\end{align*} 
and this gives the desired inequality. Clearly, equality holds if and only if $h(s) = 1/k_c(s)$ for every $s \in [0,l_a(\gamma)]$, and this characterizes the Minkowski circles.

\end{proof}

We can also obtain an analogue of Huber's theorem, which states that a connected, oriented and complete surface in $\mathbb{R}^3$ with finite total Gaussian curvature has finite topological type (see \cite{white}). Our main ingredients are estimates for the total curvature given by the next proposition, which can be seen as a characterization of the Minkowski Gaussian curvature in terms of the Euclidean Gaussian curvature of the surface and of the unit sphere.

\begin{prop}\label{gaussratio} Let $f:M\rightarrow(\mathbb{R}^3,||\cdot||)$ be an immersed surface, and let $p \in M$. The Minkowski Gaussian curvature of $M$ at $p$ is given as the ratio
\begin{align*} K(p) = \frac{K_M(p)}{K_{\partial B}(\eta(p))},
\end{align*}
where $K_M(p)$ and $K_{\partial B}(\eta(p))$ denote the Euclidean Gaussian curvatures of $M$ and $\partial B$ at $p$ and $\eta(p)$, respectively. 
\end{prop}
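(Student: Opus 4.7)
The plan is to factor the Birkhoff-Gauss map through the Euclidean Gauss map of $M$ and then read off determinants at each stage. Let $\nu:M\to\partial B_e$ denote the outward-pointing Euclidean Gauss map of $M$. The defining property of Birkhoff orthogonality is that $T_pM$ supports $B$ at $\eta(p)$; equivalently, the Euclidean outward unit normal to $\partial B$ at the point $\eta(p)$ is precisely $\nu(p)$. Since $u:\partial B_e\to\partial B$ is by definition the inverse of the Euclidean Gauss map of $\partial B$, the first step would be to record the factorization $\eta=u\circ\nu$.

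Next I would identify the three tangent planes $T_pM$, $T_{\nu(p)}\partial B_e$, and $T_{\eta(p)}\partial B$ as the same $2$-dimensional linear subspace of $\mathbb{R}^3$, namely the Euclidean orthogonal complement of $\nu(p)$. For $T_{\eta(p)}\partial B$ this uses that the tangent plane to $\partial B$ at $\eta(p)$ is parallel to the supporting hyperplane $T_pM$, hence also perpendicular to $\nu(p)$. Under this identification, $d\nu_p$, $du_{\nu(p)}$, and $d\eta_p$ become linear endomorphisms of a single common $2$-dimensional vector space, so their determinants are well-defined scalars, independent of any area form chosen on that space.

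Applying the chain rule to $\eta=u\circ\nu$ gives $d\eta_p=du_{\nu(p)}\circ d\nu_p$, and taking determinants yields
\[
K(p)=\det(d\eta_p)=\det\bigl(du_{\nu(p)}\bigr)\cdot\det(d\nu_p).
\]
The second factor equals $K_M(p)$ by the classical definition of the Euclidean Gaussian curvature. For the first, the map $u^{-1}$ \emph{is} the Euclidean Gauss map of $\partial B$, so $\det(du^{-1}_{\eta(p)})=K_{\partial B}(\eta(p))$, and therefore $\det(du_{\nu(p)})=1/K_{\partial B}(\eta(p))$. Combining these two identities gives the claimed formula.

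The only real obstacle I expect is the tangent-space identification together with checking that the orientations used to define the three determinants are mutually consistent, so that no sign error appears. Since all three Gauss-type maps are taken outward-pointing with respect to compatible orientations on the surfaces involved, this is routine; as a sanity check, in the Euclidean case one has $\eta=\nu$, $u=\mathrm{id}$ and $K_{\partial B}\equiv 1$, so the formula correctly reduces to $K(p)=K_M(p)$.
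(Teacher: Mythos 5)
Your proposal is correct and follows essentially the same route as the paper: both factor the Birkhoff--Gauss map through the Euclidean Gauss map of $M$ via the inverse Gauss map $u$ of $\partial B$ (the paper writes this relation as $\xi=\eta\circ u^{-1}$, i.e.\ $\eta=u\circ\xi$ with $\xi=\nu$) and then take determinants using the chain rule. Your version is if anything slightly more careful, since you make explicit the identification of the three tangent planes and the orientation consistency that the paper leaves implicit.
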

\begin{proof} Recall that we denote by $u:\partial B_e\rightarrow \partial B$ the inverse of the (Euclidean) Gauss map of $\partial B$, and let $\xi:M\rightarrow\partial B$ be the Euclidean Gauss map of $M$. Then we get immediately
\begin{align*} \xi = \eta\circ u^{-1},
\end{align*}
and hence $d\xi = d\eta\circ du^{-1}$. This gives
\begin{align*} K(p) = \mathrm{det}(d\eta_{p}) = \frac{\mathrm{det}\left(d\xi_{\eta(p)}\right)}{\mathrm{det}\left(du^{-1}_{\eta(p)}\right)} = \frac{K_M(p)}{K_{\partial B}(\eta(p))},
\end{align*}
where we remind the reader that we always assume that $\partial B$ has positive Gaussian curvature at all of its points.

\end{proof}

\begin{remark} As in the case of Theorems \ref{main} and \ref{totalgauss}, this proposition is analogous to a result on the circular curvature of a curve in a normed plane: this curvature type can also be characterized as the ratio of Euclidean curvatures of the curve and of the unit circle (see equality (5.9) in \cite{Ba-Ma-Sho}). However, there is no characterization of this kind for the Minkowski principal and mean curvatures. 
\end{remark}

\begin{teo} Let $M$ be a connected, oriented, complete surface immersed in $(\mathbb{R}^3,||\cdot||)$, and let $K^-(p) = \min\{K(p),0\}$, $p \in M$. If 
\begin{align*} \int_M|K^-| \ \omega < \infty,
\end{align*}
then
\begin{align*} \int_MK^+ \omega < \infty,
\end{align*}
and there exist a compact two-dimensional manifold $N$ and points $p_1,\ldots,p_k \in N$ such that $M$ is homeomorphic to $N\setminus\{p_1,\ldots,p_k\}$. 
\end{teo}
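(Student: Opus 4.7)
The plan is to reduce the statement to the classical Euclidean Huber theorem (see \cite{white}) by comparing the Minkowski objects $K$ and $\omega$ with their Euclidean counterparts $K_M$ (the Euclidean Gaussian curvature of the immersion) and $\omega_E$ (the Euclidean area form), showing that both comparison factors are pinched between two positive constants independent of $p\in M$. Once this is done, the hypothesis and conclusions transfer verbatim between the Minkowski and Euclidean settings.

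For the curvature comparison, Proposition \ref{gaussratio} gives $K(p)=K_M(p)/K_{\partial B}(\eta(p))$. Since $\partial B$ is compact and $K_{\partial B}>0$ everywhere (our standing assumption), $K_{\partial B}\circ\eta$ is bounded above and below by positive constants, uniformly over $M$. In particular $K$ and $K_M$ have the same sign at every point, and $|K^-|$, $K^+$ are pinched pointwise by positive multiples of $|K_M^-|$, $K_M^+$ respectively. For the area comparison, denote by $\xi$ the Euclidean unit normal to $M$; a direct computation from $\omega(X,Y)=\det(X,Y,\eta)$ and the fact that $X\times Y$ is Euclidean-normal to $T_pM$ gives
\begin{equation*}
\omega \;=\; (\xi\cdot\eta)\,\omega_E.
\end{equation*}
The coefficient $\xi\cdot\eta$ equals the Euclidean distance from the origin to the supporting hyperplane of $B$ at $\eta$: this is strictly positive (origin lies in $\mathrm{int}(B)$ and $B$ is strictly convex), continuous on the compact set $\partial B$, and thus uniformly bounded above and below by positive constants. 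Consequently $\omega$ and $\omega_E$ differ by a uniformly bounded positive factor.

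Combining the two comparisons, the densities $|K^-|\,\omega$ and $|K_M^-|\,\omega_E$ (resp.\ $K^+\,\omega$ and $K_M^+\,\omega_E$) are pointwise equivalent up to multiplicative constants. Since the Minkowski norm and the Euclidean norm on $\mathbb{R}^3$ are equivalent, the induced Finsler distance on $M$ is bi-Lipschitz equivalent to the induced Riemannian distance, so completeness of $M$ in the Minkowski metric is the same as Euclidean completeness. The hypothesis $\int_M|K^-|\,\omega<\infty$ therefore translates into $\int_M|K_M^-|\,\omega_E<\infty$, and the classical Huber theorem applied to $(M,\omega_E,K_M)$ yields both $\int_M K_M^+\,\omega_E<\infty$ and the existence of a compact surface $N$ and finitely many points $p_1,\ldots,p_k\in N$ with $M\cong N\setminus\{p_1,\ldots,p_k\}$. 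Translating the first conclusion back through the same comparison gives $\int_M K^+\,\omega<\infty$, and the topological conclusion is of course metric-free.

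The main obstacle is the uniform two-sided bound on the factor $\xi\cdot\eta$: the rest of the argument is essentially a formal dictionary. The bound ultimately rests on compactness of $\partial B$ together with the standing assumption that $B$ is smooth and strictly convex with everywhere positive Gaussian curvature, which together prevent any supporting hyperplane of $B$ from passing arbitrarily close to the origin and thus keep the Minkowski and Euclidean area forms comparable across $M$.
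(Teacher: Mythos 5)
Your proposal is correct and follows essentially the same route as the paper: both reduce to the classical Huber theorem via the identity $K=K_M/(K_{\partial B}\circ\eta)$ from Proposition \ref{gaussratio} and the relation $\omega=\langle\eta,\xi\rangle\,\omega_e$, with the comparison constants coming from compactness of $\partial B$. Your additional remarks justifying the positivity of $\langle\eta,\xi\rangle$ and the equivalence of the two notions of completeness are correct details that the paper leaves implicit.
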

\begin{proof} The classical Huber's theorem states that the result is true if the Minkowski Gaussian curvature $K$ is replaced by the usual Gaussian curvature $K_M$, and the Minkowski area element $\omega$ is replaced by the Euclidean area element $\omega_e$, which is defined as
\begin{align*} \omega_e(X,Y) = \mathrm{det}(X,Y,\xi),
\end{align*}
for each $p \in M$ and $X,Y \in T_pM$, where $\xi$ is the Euclidean Gauss normal of $M$ at $p$. Therefore, we just have to bound the Euclidean Gaussian curvature in terms of the Minkowski Gaussian curvature. Since it is clear that we have $\omega = \langle\eta,\xi\rangle\cdot\omega_e$, where $\langle\cdot,\cdot\rangle$ denotes the standard inner product of $\mathbb{R}^3$, we get
\begin{align*} \int_MK^+\omega = \int_M\frac{K^+_M}{K_{\partial B}}\langle\eta,\xi\rangle\cdot\omega_e,
\end{align*}
yielding the estimates
\begin{align*}\frac{\min\langle\eta,\xi\rangle}{\max(K_{\partial B})}\int_MK_M^+ \ \omega_e\leq \int_MK^+\omega\leq \frac{\max\langle\eta,\xi\rangle}{\min(K_{\partial B})}\int_MK_M^+ \ \omega_e,
\end{align*}
where the maxima and minima are taken over $\partial B$. Hence they are (positive) numbers which do not depend on $M$, but only on the geometry of $\partial B$. Similarly, we have the estimates
\begin{align*}\frac{\max\langle\eta,\xi\rangle}{\min(K_{\partial B})}\int_MK_M^- \ \omega_e\leq \int_MK^-\omega\leq \frac{\min\langle\eta,\xi\rangle}{\max(K_{\partial B})}\int_MK_M^- \ \omega_e.
\end{align*}
Combining these estimates with the classical version of Huber's theorem, we get the desired immediately.  

\end{proof}

\section{A version of Alexandrov's theorem}

Alexandrov's theorem states that any compact surface without boundary embedded in $\mathbb{R}^3$ with non-zero constant mean curvature is a sphere (see, e.g., \cite[Chapter 8]{li}). In this section we want to prove an analogous result for the Minkowski case: if $M$ is a compact surface without boundary embedded in $(\mathbb{R}^3,||\cdot||)$ whose Minkowski mean curvature is constant, then $M$ is a Minkowski sphere. First, we need some preliminary discussions.

Let $M$ be such a surface, and let $v:M\rightarrow\mathbb{R}^3$ be a smooth vector field. Let $\rho$ be given by the decomposition
\begin{align*} v(p) = T(p) + \rho(p)\eta(p), \ p \in M,
\end{align*}
where $T(p)\in T_pM$ is the tangential component of $v(p)$ and $\eta$ denotes the outward-pointing Birkhoff normal vector field of $M$. Let $\xi$ be the Euclidean normal vector field of $M$, and recall that we denote by $\langle\cdot,\cdot\rangle$ the usual inner product of $\mathbb{R}^3$. Taking the inner product on both sides of the equality above, we get
\begin{align*} \rho = \frac{\langle v,\xi\rangle}{\langle\eta,\xi\rangle}.
\end{align*}
With that, we can obtain a Minkowskian version of the divergence theorem which uses $\rho$ and the area element induced by the Birkhoff normal (instead of the Euclidean normal). To do so, recall that the Minkowski area element $\omega$ and the Euclidean area element $\omega_e$ are related by $\omega = \langle\eta,\xi\rangle\cdot\omega_e$. Assume that the vector field $v$ is defined in an open ball containing $M$, and denote by $D$ the region enclosed by $M$. We write
\begin{align*} \int_D\mathrm{div}(v) \ \mathrm{dvol} = \int_M\langle v,\xi\rangle\cdot \omega_e = \int_M\rho\cdot\langle\eta,\xi\rangle \cdot \omega_e = \int_M\rho\cdot\omega,
\end{align*}
where $\mathrm{dvol}$ denotes the usual volume form in $\mathbb{R}^3$, and the first equality is given by the classical divergence theorem. The equality above gives a Minkowskian version of the divergence theorem which is defined in terms of the Birkhoff normal vector field determined by the fixed norm in $\mathbb{R}^3$.

We turn now to the particular case where the vector field $v$ is simply the position vector of $\mathbb{R}^3$. In this case, $\mathrm{div}(v) = 3$ and we get the equality
\begin{align}\label{eq1} \int_M\rho\cdot\omega = 3\mathrm{vol}(D),
\end{align}
where $\rho$ is the \emph{affine distance function} from $M$ to the origin $o \in \mathbb{R}^3$. Indeed, $\rho$ is defined by the decomposition $p = p - o = T(p) + \rho(p)\eta(p)$, with $T(p) \in T_pM$. From \cite[III.9]{nomizu}, we have that $\rho$ satisfies the equality
\begin{align}\label{eq2} \int_M(1-\rho H)\cdot\omega = 0,
\end{align}
where the reader must be aware of the fact that our sign is changed because we assume that $\eta$ is outward-pointing.
\begin{teo}[Minkowskian version of Alexandrov's theorem] Let $M$ be a compact surface without boundary embedded in $(\mathbb{R}^3,||\cdot||)$. If $M$ has constant non-zero Minkowski mean curvature, then $M$ is a Minkowski sphere.
\end{teo}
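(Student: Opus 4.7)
The plan is a Minkowski adaptation of the Ros-type proof of Alexandrov's theorem: combine the two integral identities (\ref{eq1}) and (\ref{eq2}) with the sharp estimate of Theorem \ref{volumeestimate} in order to force equality in the latter, which has already been shown to characterize Minkowski spheres.

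First, I would ensure that $H>0$ with respect to the outward Birkhoff normal $\eta$, as required to apply Theorem \ref{volumeestimate}. Since $H$ is a nonzero constant, it suffices to exhibit a single point where $H>0$. I would enclose $M$ in a very large Minkowski sphere and shrink it concentrically until it first touches $M$ at some point $p$. At $p$, the surface $M$ lies locally inside the Minkowski sphere, both surfaces have the same outward Birkhoff normal, and by the variational characterization of principal curvatures as extrema of the normal curvature (see \cite{Ba-Ma-Tei1}) each Minkowski principal curvature of $M$ at $p$ is at least the corresponding (positive) principal curvature of the sphere. Hence $H(p)>0$, and constancy then forces $H>0$ throughout $M$.

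With $H$ a positive constant, identity (\ref{eq2}) becomes $\lambda_M(M) = H\int_M \rho\,\omega$, and combining this with identity (\ref{eq1}) yields
\begin{equation*}
\int_M \frac{1}{H}\,\omega \,=\, \frac{\lambda_M(M)}{H} \,=\, \int_M \rho\,\omega \,=\, 3\,\mathrm{vol}(D).
\end{equation*}
This is precisely the equality case in Theorem \ref{volumeestimate}, which was shown to occur only when $M$ is a Minkowski sphere, completing the proof.

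The only step with genuine geometric content is the positivity argument in the second paragraph. In the Euclidean setting, comparison with a large enclosing sphere is standard and uses only convexity of a ball; here one must verify that a large Minkowski sphere still produces the correct inequality between principal curvatures at an inside-tangency, i.e., that the Minkowski second fundamental form of $M$ at $p$ genuinely dominates that of the Minkowski sphere at $p$. Granting the normal-curvature characterization from \cite{Ba-Ma-Tei1}, this is routine, and the rest of the theorem reduces to the brief algebraic manipulation above, assembling results already proved earlier in the paper.
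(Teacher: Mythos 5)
Your proposal is correct and follows essentially the same route as the paper: combine identities (\ref{eq1}) and (\ref{eq2}) to obtain $\int_M H^{-1}\omega = 3\,\mathrm{vol}(D)$, and invoke the equality case of Theorem \ref{volumeestimate}. The only (cosmetic) difference is in establishing $H>0$: the paper cites \cite[Lemma 6.1]{Ba-Ma-Tei1} for the existence of a point of positive Minkowski Gaussian curvature, whereas you re-derive this via the standard enclosing-sphere tangency comparison, which is the same underlying argument.
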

\begin{proof} First of all, notice that there exists a point in $M$ where the Minkowski Gaussian curvature is positive (see \cite[Lemma 6.1]{Ba-Ma-Tei1}), and hence, since $\eta$ is outward-pointing, the Minkowski mean curvature is also positive at this point. It follows that if $H$ is constant, then $H$ is positive.
From (\ref{eq1}) and (\ref{eq2}) we have
\begin{align*} \int_M\omega = H\int_M\rho\cdot \omega = 3H\mathrm{vol}(D).
\end{align*}
Hence, we get the equality
\begin{align*} \int_M\frac{1}{H}\cdot\omega = 3\mathrm{vol}(D).
\end{align*}
By Theorem \ref{volumeestimate} it follows that $M$ is a Minkowski sphere. 

\end{proof}

\section{A Bertrand-Diguet-Puiseux theorem}

The classical Bertrand-Diguet-Puiseux theorem gives a characterization of the Gaussian curvature of a given point $p$ of a surface in terms of lengths and areas of small geodesic circles around $p$ (see \cite{spivak}). We can give a version of this theorem for the Minkowski Gaussian curvature, but with the disadvantage of using an auxiliary Euclidean structure on $\mathbb{R}^3$. Our main ingredient is Proposition \ref{gaussratio}. In what follows, the \emph{Euclidean geodesic circle} of center $p \in M$ and radius $r > 0$ (where $r > 0$ is small enough) is the set of points of $M$ whose distance (in the Euclidean ambient metric induced in $M$) to $p$ is less than or equal to $r$. 

\begin{teo} Let $p\in M$, and denote by $C_M(p,r)$ the length of the Euclidean geodesic circle centered at $p$ whose radius is $r> 0$. Also, write $C_{\partial B}(\eta(p),r)$ for the Euclidean geodesic circle of radius $r$ centered at $\eta(p)$. Then the Minkowski Gaussian curvature of $M$ at $p$ is given as
\begin{align*} K(p) = \lim_{\ r\rightarrow 0^+}\frac{2\pi r - C_M(p,r)}{2\pi r - C_{\partial B}(\eta(p))}.
\end{align*}
\end{teo}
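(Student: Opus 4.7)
The plan is to reduce this limit to the classical Bertrand-Diguet-Puiseux theorem applied separately on the two Euclidean surfaces $M$ and $\partial B$, and then to glue the results via Proposition \ref{gaussratio}. The key observation is that both $C_M(p,r)$ and $C_{\partial B}(\eta(p),r)$ are defined using the \emph{Euclidean} induced metric on the respective surfaces, so the classical theorem applies directly to each.

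First, I would invoke the classical Bertrand-Diguet-Puiseux theorem on the Euclidean surface $M$ at the point $p$, yielding
\begin{align*}
K_M(p) = \lim_{r\to 0^+}\frac{3(2\pi r - C_M(p,r))}{\pi r^3},
\end{align*}
and the same formula on the Euclidean surface $\partial B$ at the point $\eta(p)$:
\begin{align*}
K_{\partial B}(\eta(p)) = \lim_{r\to 0^+}\frac{3(2\pi r - C_{\partial B}(\eta(p),r))}{\pi r^3}.
\end{align*}
By the standing hypothesis on $\partial B$, the denominator $K_{\partial B}(\eta(p))$ is strictly positive, so the second expression has a nonzero limit as $r\to 0^+$, and in particular $2\pi r - C_{\partial B}(\eta(p),r)$ is eventually nonzero for small $r$.

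Next, I would divide the two asymptotics. The common factor $3/(\pi r^3)$ cancels, and the quotient rule for limits (valid because the denominator has a nonzero limit after multiplication by $3/(\pi r^3)$) gives
\begin{align*}
\frac{K_M(p)}{K_{\partial B}(\eta(p))} = \lim_{r\to 0^+}\frac{2\pi r - C_M(p,r)}{2\pi r - C_{\partial B}(\eta(p),r)}.
\end{align*}
Finally, Proposition \ref{gaussratio} identifies the left-hand side with $K(p)$, completing the argument.

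I do not foresee a real obstacle: the only delicate point is making sure that the ratio of limits is justified, which is immediate once one notes that $K_{\partial B}(\eta(p))>0$ and that the defining limits of both Euclidean curvatures exist by the classical theorem. The whole proof is essentially one line once Proposition \ref{gaussratio} is in hand, which is precisely why the authors advertise that proposition as the main ingredient.
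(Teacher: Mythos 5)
Your proposal is correct and follows exactly the same route as the paper: apply the classical Bertrand-Diguet-Puiseux theorem separately to the Euclidean surfaces $M$ and $\partial B$, take the quotient of the two limits (justified because $K_{\partial B}(\eta(p)) \neq 0$ by the standing hypothesis), and identify the result with $K(p)$ via Proposition \ref{gaussratio}. Your remark that $2\pi r - C_{\partial B}(\eta(p),r)$ is eventually nonzero for small $r$ is a small point of rigor that the paper leaves implicit, but the argument is the same.
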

\begin{proof} We prove the theorem using the classical Bertrand-Diguet-Puiseux theorem, which states that the Euclidean Gaussian curvature of $M$ at $p$ is given as
\begin{align*} K_M(p) = \frac{3}{\pi}\cdot\lim_{\ r\rightarrow 0^+}\frac{2\pi r - C_M(p,r)}{r^3}.
\end{align*}
In view of that and using Proposition \ref{gaussratio}, we get
\begin{align*} K(p) =  \frac{K_M(p)}{K_{\partial B}(\eta(p))} = \frac{\lim_{r\rightarrow 0^+}\frac{2\pi r - C_M(p,r)}{r^3}}{\lim_{r\rightarrow 0^+}\frac{2\pi r - C_{\partial B}(p,r)}{r^3}} = \lim_{\ r\rightarrow 0^+}\frac{2\pi r - C_M(p,r)}{2\pi r - C_{\partial B}(\eta(p))},
\end{align*}
where the last equality holds since we are assuming that $\partial B$ has non-zero Euclidean Gaussian curvature at every point. 

\end{proof}

\begin{remark} If $A_M(p,r)$ and $A_{\partial B}(\eta(p),r)$ denote the area of the geodesic circle with center $p$ and radius $r$ in $M$ and the area of the geodesic circle with center $\eta(p)$ and radius $r$ in $\partial B$, respectively, then the same argument can be applied to prove the equality
\begin{align*} K(p) = \lim_{\ r\rightarrow 0^+}\frac{\pi r^2 - A_M(p,r)}{\pi r^2-A_{\partial B}(\eta(p),r)}.
\end{align*}
\end{remark}

\begin{open} Can one obtain an analogue of the Bertrand-Diguet-Puiseux theorem which does not involve an auxiliary Euclidean structure?
\end{open}

\end{document}